\numberwithin{equation}{section}
\theoremstyle{plain}
\newtheorem{proposition}{Proposition}[section]
\newtheorem{theorem}[proposition]{Theorem}		
\newtheorem*{theorem*}{Theorem}		
\newtheorem{corollary}[proposition]{Corollary}
\theoremstyle{definition}
\newtheorem{remark}[proposition]{Remark}
\newcommand{\R}{\mathbb R}
\DeclareMathOperator{\grad}{grad}
\DeclareMathOperator{\Crit}{Crit}
\begin{document}


\title[Local behaviour of an analytic flow near an unstable set]{Local behaviour of the gradient flow of an analytic function near the unstable set of a critical point}

\author{Graeme Wilkin}
\address{Department of Mathematics,
National University of Singapore, 
Singapore 119076}
\email{graeme@nus.edu.sg}

\thanks{This research was partially supported by grant number R-146-000-264-114 from the National University of Singapore. The author also acknowledges support from NSF grants DMS 1107452, 1107263, 1107367 ``RNMS GEometric structures And Representation varieties'' (the GEAR Network).}

\date{\today}

\begin{abstract}
This paper extends previous work \cite{wilkin-singular-morse}, which shows that the main theorem of Morse theory holds for a large class of functions on singular spaces, where the function and the underlying singular space are required to satisfy the five conditions explained in detail in the introduction to \cite{wilkin-singular-morse}. The fourth of these conditions requires that the gradient flow of the function is well-behaved near the critical points, which is a very natural condition, but difficult to explicitly check for examples without a detailed knowledge of the flow. In this paper we prove a general result showing that the first three conditions always imply the fourth when the underlying space is locally compact. Moreover, if the function is proper and analytic then the first four conditions are all satisfied.
\end{abstract}




\maketitle


\thispagestyle{empty}

\baselineskip=16pt



\section{Introduction}

Morse theory determines a relationship between the topology of a manifold and the critical points of certain smooth functions on that manifold. The key result that makes this work is the \emph{main theorem of Morse theory}, which describes the homotopy type of the manifold in terms of a sequence of cell attachments, each of which is determined by the local data of the function around a particular critical point.

Morse theory and its generalisations have been very successful in proving the existence of critical points for a wide variety of applications. The original example was Morse's work on the existence of geodesics of arbitrarily large length on a sphere with any metric \cite[Ch. IX]{Morse96} (see also \cite{Milnor63}, \cite{Palais63} and \cite{Smale64} for more modern treatments) and many more subsequent applications (see for example \cite{Bott82}, \cite{Bott88} and the references therein).  Conversely, when the critical points are known and the Morse function is perfect, then Morse theory can be used to compute topological invariants of a manifold; for example Bott's work on the topology of Lie groups \cite{Bott56} and the work of Atiyah \& Bott \cite{AtiyahBott83} and Kirwan \cite{Kirwan84} on the topology of symplectic quotients. 

There are also many interesting applications and potential applications of these ideas to singular spaces. The Stratified Morse Theory of Goresky and MacPherson \cite{GoreskyMacPhersonSMT} is a very general theory in this direction with applications to the topology of algebraic and analytic varieties \cite[Part II]{GoreskyMacPhersonSMT} and the topology of complements of affine subspaces of $\mathbb{R}^n$ \cite[Part III]{GoreskyMacPhersonSMT}. One of the motivating questions behind \cite{wilkin-singular-morse} is the question of extending the methods of Atiyah \& Bott and Kirwan to the norm-square of a moment map on a non-smooth affine variety, where Goresky and MacPherson's theory does not necessarily apply. Theorem 1.2 of \cite{wilkin-singular-morse} shows that the main theorem of Morse theory does indeed work in this setting. 

In fact this theorem applies in greater generality to functions on singular spaces satisfying the five conditions of \cite{wilkin-singular-morse}, which we now recall. 


Let $M$ be a real analytic Riemannian manifold with the metric topology, and $Z \subset M$ a closed subspace. In the sequel we will use $\| x_1 - x_2 \|$ to denote the distance between two points $x_1, x_2 \in M$ with respect to the metric on $M$. Given a smooth function $f : M \rightarrow \mathbb{R}$, the gradient of $f$ is a well-defined vector field $\grad f(x)$ on $M$. Suppose that for each $x \in Z$ there exists an open interval $(-\varepsilon, \varepsilon)$ such that the flow $\varphi(x,t)$ of $-\grad f$ with initial condition $x$ exists, depends continuously on the initial conditions and $\varphi(x,t) \in Z$ for all $t \in (-\varepsilon, \varepsilon)$. For example, these conditions are satisfied when the flow is generated by a group action which preserves the subspace $Z$. Moment map flows form an important class of such examples, but here we do not restrict to this setting. 

A \emph{critical point} of $f : Z \rightarrow \mathbb{R}$ is then a fixed point of this flow. Let $\Crit_Z(f) \subset Z$ denote the subset of all critical points in $Z$.  Given a critical value $c \in \R$ and the associated critical set $C = \Crit_Z(f) \cap f^{-1}(c)$, define $W_C^+$ and $W_C^-$ to be the stable and unstable sets of $C$ with respect to the flow
\begin{align*}
W_C^+ & := \left\{ x \in Z \mid \lim_{t \rightarrow \infty} \varphi(x, t) \in C \right\} \\
W_C^- & := \left\{ x \in Z \mid \lim_{t \rightarrow - \infty} \varphi(x, t) \in C \right\} .
\end{align*}
Let $W_x^+$ and $W_x^-$ denote the analogous stable/unstable sets with respect to a specific critical point $x \in C$. Recall the following conditions on the function $f : Z \rightarrow \mathbb{R}$ and its flow $\varphi$ from \cite{wilkin-singular-morse}.

\begin{enumerate}

\item The critical values of $f$ are isolated.

\item (Compactness of the flow) 

\noindent For any regular values $a < b$ and any $x \in f^{-1}((a,b))$ either there exists $T_+ > 0$ such that $f\left( \varphi(x, T_+) \right) = a$ or $\lim_{t \rightarrow \infty} \varphi(x, t)$ exists in $f^{-1}((a, b))$. Similarly, either there exists $T_- < 0$ such that $f \left( \varphi(x, T_-) \right) = b$ or $\lim_{t \rightarrow -\infty} \varphi(x, t)$ exists in $f^{-1}((a,b))$.

\item $f$ is analytic.

\item (Local behaviour of the flow near the critical points) 

\noindent For each non-minimal critical point $x$, let $C = \Crit_Z(f) \cap f^{-1}(f(x))$. For each $a < f(x)$ such that there are no critical values in $[a, f(x))$ and for each neighbourhood $U \subset f^{-1}(a)$ of $W_x^- \cap f^{-1}(a)$ there exists a neighborhood $V$ of $x$ such that for each $y \in V \setminus W_C^+$ there exists $T > 0$ such that $\varphi(y, T) \in U$.

\item (Neighbourhood deformation retract property of the level sets of the unstable set) 

\noindent For each critical value $c$ let $C = \Crit_Z(f) \cap f^{-1}(c)$. Then there exists $\varepsilon > 0$ such that $W_C^- \cap f^{-1}(c-\varepsilon)$ has an open neighborhood $E \subset f^{-1}(c-\varepsilon)$ and a strong deformation retract $r : E \times [0,1] \rightarrow E$ of $E$ onto $W_C^- \cap f^{-1}(c-\varepsilon)$ such that (using $E_s$ to denote the image $r(E, s)$ for each $s \in [0,1]$) we have
\begin{enumerate}

\item $E_s$ is open in $f^{-1}(c-\varepsilon)$ for all $s \in [0,1)$,


\item $E_s = \bigcup_{t > s} E_t$ and $\overline{E}_s = \bigcap_{t < s} E_t$ for all $s \in (0,1)$.

\end{enumerate}

\end{enumerate}

Theorem 1.1 of \cite{wilkin-singular-morse} shows that the main theorem of Morse theory holds if all of the above conditions are satisfied. Moreover, \cite[Thm. 1.2]{wilkin-singular-morse} shows that the theory is nonempty, in the sense that it is satisfied for a large class of interesting examples, namely when $f$ is the norm-square of a moment map on an affine variety.

The intuition behind Condition 4 is that if the flow begins close to a critical point $x$ then it should not wander far from the unstable set $W_x^-$. This is a very natural condition, however to check that it holds for a particular example requires some detailed knowledge of the flow; for example when $f : Z \rightarrow \mathbb{R}$ is the norm square of a moment map, then Kirwan \cite[Sec. 10]{Kirwan84} shows that the flow is well-behaved on the ambient manifold $M$, which is sufficient to show that Condition 4 holds on the singular space $Z$ (cf. \cite[Prop. 4.2]{wilkin-singular-morse}).

In this paper we take a different approach, and show that if the space $Z$ is locally compact then Condition 4 follows from the analyticity of the function $f : Z \rightarrow \mathbb{R}$. If the function $f : Z \rightarrow \mathbb{R}$ is the restriction of a smooth, but not analytic, function on $M$ then the limit of the flow may not be a single point (see for example \cite[pp13-14]{PalisdeMelo82}) hence the flow mapping a level set to the nearest critical level may not even be defined, let alone continuous, and so in this case extra conditions would be needed to show that the main theorem of Morse theory holds using the methods of \cite{wilkin-singular-morse}.  In contrast to the proof of \cite[Prop. 4.2]{wilkin-singular-morse}, there is no need to study the behaviour of the flow using local coordinates on the ambient manifold $M$, and so the proof given here is intrinsic to the singular space $Z$. Since Conditions 1, 2 and 3 are easy to check for moment maps on varieties, then the theorem below generalises \cite[Prop. 4.2]{wilkin-singular-morse}.

\begin{theorem}\label{thm:locally-compact-4}
Let $Z \subset M$ be a closed locally compact subset and $f : Z \rightarrow \mathbb{R}$ be a function satisfying Conditions 1, 2 and 3. Then Condition 4 is satisfied.
\end{theorem}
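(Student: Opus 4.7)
The plan is to argue by contradiction, extracting a sub-sequential limit of the gradient flow that lies in $W_x^-\cap f^{-1}(a)\subset U$ and so contradicts the hypothesis that $\varphi(y_n,T_n)\notin U$. The two key tools are the Lojasiewicz gradient inequality (available from Condition~3, applied to $f$ regarded as an analytic function on $M$ near $x$) and the local compactness of $Z$.

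Suppose Condition 4 fails. Then one obtains a sequence $y_n\to x$ with $y_n\notin W_C^+$ such that the first hitting time $T_n>0$ of $f^{-1}(a)$ exists and $w_n := \varphi(y_n, T_n)\notin U$; the existence of $T_n$ follows from Conditions 1--2, since any limiting behaviour other than reaching level $a$ would force the flow to converge to a critical point of $C$, contradicting $y_n \notin W_C^+$. Continuity of the flow at the fixed point $x$ gives $T_n\to\infty$. Apply the Lojasiewicz inequality at $x$ and use local compactness to fix $\rho>0$ small so that $K := \overline{B(x,\rho)}\cap Z$ is compact and, by Condition 1, $\Crit_Z(f)\cap K\subset C$. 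Let $\sigma_n(\rho)$ denote the first exit time of $\varphi(y_n,\cdot)$ from $B(x,\rho)$; this is finite because otherwise the trajectory would stay in $K$ and, by Lojasiewicz, converge to a point of $\Crit_Z(f)\cap K\subset C$, placing $y_n\in W_C^+$. A second application of continuity at $x$ yields $\sigma_n(\rho)\to\infty$. Passing to a subsequence, $z_n := \varphi(y_n,\sigma_n(\rho))\to z_\rho\in\partial B(x,\rho)\cap Z$ by compactness of $\partial B(x,\rho)\cap Z$.

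The crux of the proof is to show $z_\rho\in W_x^-$. Continuous dependence gives $\varphi(z_\rho,-s)=\lim_n\varphi(y_n,\sigma_n(\rho)-s)\in K$ for each fixed $s>0$, so the entire backward orbit of $z_\rho$ lies in $K$; by Condition 2 and Lojasiewicz it converges to some $y^*\in C\cap K$. The main technical obstacle is ruling out $y^*\neq x$: if $y^*$ were a critical point different from $x$, then for $s$ large the backward orbit of $z_\rho$ would lie in a small neighborhood $N$ of $y^*$ disjoint from $x$, and by continuous dependence $\varphi(y_n,\sigma_n(\rho)-s)$ would also lie in $N$ for each fixed such $s$ and all large $n$; yet the same backward orbit terminates at $y_n \to x\notin N$ at time $s=\sigma_n(\rho)$, so it must exit $N$ on the way. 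Extracting a further sub-sequential limit at the exit point and iterating (a shadowing-style analysis) is designed to produce a trajectory forcing $y_n \in W_{y^*}^+ \subset W_C^+$, the desired contradiction. This iteration is subtle in the singular setting, where the set $C$ need not be discrete at $x$ and one cannot simply shrink $\rho$ until $C\cap K=\{x\}$; compactness of $K$ and uniqueness of the Lojasiewicz limit must be used in place of a direct isolation argument.

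Once $z_\rho\in W_x^-$ is established, the forward orbit from $z_\rho$ reaches $f^{-1}(a)$ at a finite time $T(\rho)$ in a point $w_\rho\in W_x^-\cap f^{-1}(a)\subset U$, using $f(z_\rho)<f(x)$, the absence of critical values in $[a,f(x))$, and Condition 2. Continuous dependence on initial conditions, together with $T_n-\sigma_n(\rho)\to T(\rho)$, yields $w_n\to w_\rho\in U$; openness of $U$ then contradicts $w_n\notin U$, completing the proof.
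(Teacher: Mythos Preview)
Your overall contradiction strategy is the same as the paper's, but the paper avoids your ``crux'' step entirely, and that step---as you yourself flag---is not actually proved in your write-up. The shadowing/iteration argument meant to rule out $y^\ast\neq x$ is only sketched: you produce a point of the backward orbit of $z_\rho$ near $y^\ast$ and observe that the orbits of the $y_n$ must exit any small neighbourhood $N$ of $y^\ast$ on their way back to $y_n\to x$, but nothing in what you wrote forces $y_n\in W_{y^\ast}^+$. In the singular setting, with $C$ possibly non-discrete near $x$, iterating this exit-point extraction can produce an infinite chain of distinct critical limits without ever yielding the desired contradiction. As written, this is a genuine gap.

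The paper's proof sidesteps the whole issue by using the Lojasiewicz inequality not merely for convergence but for a \emph{length estimate}: one computes $\frac{\partial}{\partial t}(c-f(y_t))^\theta\geq C\theta\,\|\grad f(y_t)\|$ while the flow remains in the Lojasiewicz neighbourhood $V$, so the arclength from $y_0$ down to level $c-\varepsilon$ is at most $\tfrac{1}{C\theta}\varepsilon^\theta$. Choosing $\varepsilon$ so that this is less than $\tfrac12\delta$ (with $\overline{B(x,\delta)}\cap Z$ compact and contained in $V$) forces the entire flow segment, and in particular the endpoint on $f^{-1}(c-\varepsilon)$, to stay in the compact set $\overline{B(x,\delta)}\cap Z$. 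No exit-time or backward-orbit analysis is needed. One then passes to a convergent subsequence $L^{-1}(x_{n_k})\to z$ on $f^{-1}(c-\varepsilon)$ and uses continuity of the \emph{upward} level map $L:f^{-1}(c-\varepsilon)\to f^{-1}(c)$ (this is the map whose continuity is guaranteed by Conditions~2 and~3) to get $L(z)=\lim x_{n_k}=x$, i.e.\ $z\in W_x^-\subset U$, contradicting $L^{-1}(x_n)\notin U$. The point you are missing is that Lojasiewicz already controls \emph{where} the flow lands at level $c-\varepsilon$, not just \emph{whether} it converges; this replaces your exit-time and shadowing machinery with a two-line length bound.
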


As a consequence of the above theorem, we can give a simple criterion for Conditions 1--4 to hold.

\begin{corollary}\label{cor:proper-1-4}
If $f : Z \rightarrow \mathbb{R}$ is proper and analytic, then Conditions 1--4 are satisfied.
\end{corollary}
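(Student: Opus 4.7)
The plan is to verify each of Conditions~1--4 directly from the hypotheses that $f$ is proper and analytic, invoking Theorem~\ref{thm:locally-compact-4} for the last of these. Condition~3 is given. Properness immediately gives that $f^{-1}([a,b])$ is compact for every pair $a \le b$, and consequently $Z$ is locally compact: for each $x \in Z$ the set $f^{-1}([f(x)-1, f(x)+1])$ is a compact neighborhood.

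For Condition~1, I would observe that $\Crit_Z(f) = Z \cap \{x \in M : \grad f(x) = 0\}$ is contained in a real analytic subvariety of $M$, and that $f$ is constant on each connected component of this subvariety (because $\grad f$ vanishes there and $f$ is analytic). Inside the compact set $f^{-1}([a,b])$, the Lojasiewicz structure theorem (semi-analytic sets have finitely many connected components on compacta) ensures that this subvariety has only finitely many components, so $f$ takes only finitely many values on $\Crit_Z(f) \cap f^{-1}([a,b])$, which proves that the critical values are isolated.

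Condition~2 is the main analytic step. Fix regular values $a < b$ and $x \in f^{-1}((a,b))$. Since $f$ is nonincreasing along the forward flow and $f^{-1}([a,b])$ is compact, standard ODE extension arguments allow $\varphi(x,t)$ to be continued for all $t \ge 0$ as long as it remains in $f^{-1}([a,b])$: either some finite $T_+$ takes the orbit to $f^{-1}(a)$, or $f(\varphi(x,t))$ decreases monotonically to a limit $L \ge a$. In the latter case the orbit is contained in the compact set $f^{-1}([L,f(x)])$, and the Lojasiewicz convergence theorem for gradient flows of real analytic functions guarantees that $\varphi(x,t)$ actually converges to a single critical point $x^* \in \Crit_M(f)$. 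Since $Z$ is closed, $x^* \in Z$; and because $f(x^*) = L$ while $a$ is a regular value, one must have $L > a$, so the limit lies in $f^{-1}((a,b))$. The backward flow is handled identically, interchanging the roles of $a$ and $b$.

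Finally, having verified Conditions~1, 2, and~3 on the locally compact space $Z$, Theorem~\ref{thm:locally-compact-4} yields Condition~4. The principal obstacle in this plan is the invocation of the Lojasiewicz gradient inequality to upgrade compactness of the orbit closure to genuine convergence of the flow to a single critical point — without this step one could in principle have limit sets that accumulate on distinct critical points of the common value $L$, and then the precise statement of Condition~2 would fail.
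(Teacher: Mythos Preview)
Your proposal is correct and follows essentially the same route as the paper: Condition~3 by hypothesis, Condition~1 from the analytic Morse--Sard phenomenon (the paper cites Sou\v{c}ek--Sou\v{c}ek rather than arguing via the component structure of $\{\grad f = 0\}$, but this is the same content), Condition~2 from properness plus the Lojasiewicz convergence theorem for analytic gradient flows, and Condition~4 from local compactness together with Theorem~\ref{thm:locally-compact-4}. Your identification of the Lojasiewicz convergence step as the crux of Condition~2 is exactly what the paper relies on as well.
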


\section{Proof of the main results}

The local existence of the flow together with Condition 2 shows that if there are no critical values in $[a,b]$ then the flow defines a homeomorphism of level sets $L_a^b : f^{-1}(a) \rightarrow f^{-1}(b)$. If $c$ is a critical value and there are no critical values in $[a,c)$ then Condition 2 and 3 guarantee a well-defined continuous map $L_a^c : f^{-1}(a) \rightarrow f^{-1}(c)$ (this is explained in detail in \cite[Prop. 2.4]{wilkin-singular-morse}). If $C \subset f^{-1}(c)$ is the subset of critical points, then $(L_a^c)^{-1}(C) = W_C^- \cap f^{-1}(a)$ and the restriction $L_a^c : (f^{-1}(a) \setminus W_C^-) \cap f^{-1}(a) \rightarrow f^{-1}(c) \setminus C$ is a homeomorphism. Condition 1 implies that we can always find such values of $a < c$ for each critical value of $c$.

With this setup, we can now prove that Condition 4 holds if the first three conditions hold and the space $Z$ is locally compact.

\begin{proof}[Proof of Theorem \ref{thm:locally-compact-4}]
Let $c$ be a critical value. Since Condition 1 implies that the critical values are isolated, then there exists $a < c$ such that there are no critical values in $[a, c)$.

Suppose that there is a critical point $x$ for which Condition 4 is not satisfied. Since $f$ is analytic, then the Lojasiewicz inequality $\| \grad f(y) \| \geq C \left| f(x) - f(y) \right|^{1-\theta}$ (cf. \cite{Simon83}) applies in a neighbourhood $V$ of the critical point $x$, and the constant $C$ in the inequality is fixed on this neighbourhood. Since $Z$ is locally compact and has topology induced by the metric topology on $M$, then we can choose $\delta > 0$ so that $\{ y \in Z \, : \, \| y - x \| \leq \delta \}$ is compact. By shrinking $\delta$ if necessary we can also guarantee that $\| y - x \| < \delta$ implies that $y \in V$. Now choose $\varepsilon > 0$ so that $a \leq c - \varepsilon$ and that $\frac{1}{C \theta} \varepsilon^\theta < \frac{1}{2} \delta$. If a flow line $y_t$ is contained in the neighbourhood $V$ and satisfies $f(y_0) = f(x) = c$, then the Lojasiewicz inequality implies that
\begin{align*}
\frac{\partial}{\partial t} \left( c - f(y_t) \right)^\theta & = - \theta \left( c - f(y_t) \right)^{\theta - 1} \cdot \frac{\partial}{\partial t} f(y_t) \\
 & = \theta \left( c - f(y_t) \right)^{\theta - 1} \cdot \| \grad f(y_t) \|^2 \\
 & \geq C \theta \| \grad f(y_t) \| .
\end{align*}
Now we show that if $\| y_0 - x \| < \frac{1}{2} \delta$ and $f(y_T) = c - \varepsilon$ then $y_t \in V$ for all $t \in [0,T]$. Let $t_{m}$ be the minimal value of $t$ for which $y_{t_m} \notin V$ and suppose for contradiction that $f(y_{t_m}) > c-\varepsilon$ (and so $t_m < T$). Then for all $0 < t \leq t_m$, the length of the flow line from $y_0 \in f^{-1}(c)$ to $y_t$ is
\begin{equation*}
\int_0^t \| \grad f(y_s) \| \, ds \leq \frac{1}{C \theta} | c - f(y_t) |^\theta < \frac{1}{C \theta} \varepsilon^\theta < \frac{1}{2} \delta .
\end{equation*}
The triangle inequality for the distance on $M$ then shows that $\| y_{t_m} - x \| \leq \| y_{t_m} - y_0 \| + \| y_0 - x \| < \delta$, and so $y_{t_m} \in V$, contradicting the definition of $t_m$. Therefore we must have $f(y_{t_m}) < c-\varepsilon = f(y_T)$ and so the Lojasiewicz inequality applies along the flow line for all $0 \leq t \leq T$, which (after repeating the above argument) implies that $\| y_T - x \| < \delta$. 

Now let $L := L_{c-\varepsilon}^c : f^{-1}(c-\varepsilon) \rightarrow f^{-1}(c)$ be the level set map defined above, and recall that $L^{-1}$ is well-defined and continuous on $f^{-1}(c) \setminus C$. If Condition 4 is not satisfied for the critical point $x$, then there exists an open subset $U \subset f^{-1}(c-\varepsilon)$ containing $W_x^- \cap f^{-1}(c-\varepsilon)$, and a sequence $\{ x_n \}$ in $f^{-1}(c) \setminus C$ such that 
\begin{enumerate}

\item $x_n \rightarrow x$, and

\item the corresponding sequence $\{ L^{-1}(x_n) \}$ in $f^{-1}(c-\varepsilon)$ satisfies $L^{-1}(x_n) \notin U$ for all $n$.

\end{enumerate}

Choose $N$ such that $\| x_n - x \| < \frac{1}{2} \delta$ for all $n \geq N$. The above proof shows that the flow line from $x_n$ to $L^{-1}(x_n)$ has uniformly bounded length $\| L^{-1}(x_n) - x \| < \delta$ for all $n \geq N$, and so the sequence $\{ L^{-1}(x_n) \}$ is contained in a compact set by our choice of $\delta$ (which uses the local compactness of $Z$). Therefore there is a subsequence $\{ x_{n_k} \}$ such that $L^{-1}(x_{n_k}) \rightarrow z \in f^{-1}(c-\varepsilon)$. Since the map $L : f^{-1}(c-\varepsilon) \rightarrow f^{-1}(c)$ is continuous, then
\begin{equation*}
L(z) = \lim_{k \rightarrow \infty} L \circ L^{-1}(x_{n_k}) = \lim_{k \rightarrow \infty} x_{n_k} = x ,
\end{equation*}
so $z \in W_x^-$, contradicting the assumption that $L^{-1}(x_n) \notin U$ for all $n$. Therefore Condition 4 is satisfied on the level set $f^{-1}(c-\varepsilon)$.

Since the map on level sets defines a homeomorphism $f^{-1}(a) \rightarrow f^{-1}(c-\varepsilon)$ then the same is true for the level set $f^{-1}(a)$. Therefore Condition 4 is satisfied for all level sets $f^{-1}(a)$ such that $a < c$ and there are no critical values in the interval $[a,c)$.
\end{proof}

\begin{remark}
The above proof depends on two key facts: (a) the level set map $L : f^{-1}(c-\varepsilon) \rightarrow f^{-1}(c)$ is continuous, and (b) there exists a neighbourhood of $x$ such that flow lines with initial condition in this neighbourhood must remain in a compact subset of $Z \cap f^{-1}([c-\varepsilon, c])$. The first fact follows from the analyticity of the function $f$, and the second follows from the analyticity of $f$ and the local compactness of $Z$. If we only assume that $f$ is the restriction of a smooth function on the ambient manifold $M$, then we would also need some additional assumptions to guarantee that (a) and (b) above hold. 
\end{remark}

\begin{proof}[Proof of Corollary \ref{cor:proper-1-4}]
An analytic function defined on a compact set has isolated critical values (cf. \cite{SoucekSoucek71}). Therefore Condition 1 is satisfied for a proper analytic function.

If the function is proper, then $f^{-1}([c-\varepsilon,c+\varepsilon])$ is compact for all $\varepsilon$, hence the Lojasiewicz inequality method (cf. \cite{Simon83}) shows that the gradient flow of $f$ either converges or flows out of this set, and so Condition 2 is satisfied.

Condition 3 is satisfied by assumption. Since $f$ is proper then $f^{-1}([c-\varepsilon, c])$ is compact, and so the previous theorem applies to show that Condition 4 is also satisfied.
\end{proof}


\end{document}